\newtheorem{theorem}{Theorem}[section]
\newtheorem{cor}{Corollary}[section]
\theoremstyle{definition}
\newtheorem{remark}{Remark}[section]
\theoremstyle{proof}
\newcommand{\E}{\mathbb{E}}
\newcommand{\R}{\mathbb{R}}
\newcommand{\N}{\mathbb{N}}
\newcommand{\pr}[1]{\mathbb{P}\left( #1 \right)}
\newcommand{\dra}{\stackrel{d}{\longrightarrow}}
\newcommand{\normal}[1]{\text{\normalfont{#1}}}
\newcommand{\ra}{\rightarrow}
\newcommand{\deq}{\stackrel{d}{=}}
\newcommand{\bround}[1]{\Big( #1 \Big)}
\newcommand{\bsq}[1]{\Big[ #1 \Big]}
\newcommand{\br}[1]{\left\{ #1 \right\}}
\newcommand{\ddd}{,\dots,}
\newcommand{\head}[1]{\vspace{.5cm}\noindent \textbf{#1}}
\title{\textbf{Formal construction of some exchangeable structures}}
\author{Minh-Toan Nguyen \\ GIPSA-lab, Grenoble Alpes University}
\date{}
\begin{document}
\maketitle
\begin{abstract}
We show that exchangeable structures such as Polya urn model and Chinese restaurant process can be constructed from sets with a real number of elements. From this construction, the exchangeability of these structures becomes obvious and the calculations on them become extremely simple.
\end{abstract}

\section{Introduction}
The cardinality of any set is a natural number. We will break this rule by considering a general kind of sets, called \emph{formal sets}, which have a real number of elements. We will not try to make sense of the formal sets, instead we will treat them like usual sets and build from them meaningful structures such as Polya urn model and Chinese restaurant process. From this viewpoint, we can see through some non-trivial properties of these structures and the related objects. One of these properties is exchangeability, satisfied when a random structure has its law unchanged when a finite number of its elements are permuted. The exchangeability, which comes as a surprise from the definition of these structures, becomes obvious in the formal constructions. Moreover, the formal construction greatly simplify calculations. The usual calculations on these structures, which involve induction, integrals and Jacobian determinants, now can be done by simple combinatorial calculations.

The Polya urn model and the Chinese restaurant process that we will build from the formal sets are important probabilistic structures. The Polya urn model is a classic example of infinite exchangeable  sequences. It is closely related to Dirichlet distribution \cite{lin2016dirichlet} frequently used in Bayesian statistics. The Chinese restaurant process is an exchangeable random partition of $\N$. It gives rise to Ewens sampling formula \cite{crane2016ubiquitous}  and Poisson-Dirichlet distribution, which appears in various mathematical problems such as random walk \cite{derrida1997random}, Brownian motion \cite{pitman1997two}, fragmentation and coalescent process \cite{berestycki2009recent} \cite{bertoin2006random} and prime factorization of large integers \cite{billingsley1972distribution} \cite{donnelly1993asymptotic}. The Polya urn model and the Chinese restaurant process are members of a larger family of exchangeable structures widely used in Bayesian topic model, as they provide a flexible and elegant framework for modeling data without assuming a fixed number of clusters. Some notable members of this family are Latent Dirichlet model \cite{blei2003latent}  \cite{pritchard2000inference}, Indian buffet process \cite{griffiths2011indian}, hierarchical Dirichlet process \cite{teh2004sharing} and nested Chinese restaurant process \cite{blei2010nested} \cite{griffiths2003hierarchical}.

\section{Exchangeability}\label{exchange}
A finite sequence $(Y_1 \ddd Y_n)$ of random variables is \emph{exchangeable} if
\begin{align}
(Y_1 \ddd Y_n) \deq (Y_{\sigma(1)} \ddd Y_{\sigma(n)})
\end{align}
for each permutation $\sigma$ of $[n]$. An infinite sequence $(Y_i)_{i=1}^\infty$ is exchangeable if 
\begin{align}
(Y_1,Y_2,\dots) \deq (Y_{\sigma(1)}, Y_{\sigma(2)}, \dots)
\end{align}
for each finite permutation of $\N_+$, i.e. permutations such that $\br{i: \sigma(i)\neq i}$ is finite.

Exchangeability arises naturally from sampling. Consider a set with $m$ elements, each with a label that is not necessarily unique. Then if we randomly draw $n<m$ elements from the set without replacement, the labels of these elements form an exchangeable sequence. More generally, consider a random vector $(X_1, \ldots, X_m)$. If we select $n$ distinct indices $i_1, \ldots, i_n$ randomly from the set $[m]$, the resulting sequence $(X_{i_1}, \ldots, X_{i_n})$ is exchangeable.

We can create an exchangeable sequence by picking a random probability measure and draw an i.i.d. sequence from it. De Finetti's theorem states that all infinite exchangeable sequence can be constructed this way. In other words, 
\begin{theorem}
Every infinite exchangeable sequence is a mixture of i.i.d. sequences.
\end{theorem}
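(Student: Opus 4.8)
The plan is to construct the mixing measure explicitly, as the almost-sure weak limit of the empirical distributions $\mu_n := \tfrac1n\sum_{i=1}^n\delta_{Y_i}$, and then to verify that conditionally on this limit the sequence is i.i.d.; throughout I would assume the $Y_i$ take values in a Polish space, so that regular conditional distributions exist. Let $\mathcal S_n$ be the $\sigma$-algebra of events invariant under permutations of the first $n$ coordinates, so $\mathcal S_1\supseteq\mathcal S_2\supseteq\cdots$, with intersection $\mathcal E$, the exchangeable $\sigma$-algebra. For a fixed bounded measurable $f$, transposing coordinates $1$ and $i$ (any $i\le n$) leaves the law unchanged by exchangeability and leaves $\mathcal S_n$ fixed, so $\EE{f(Y_1)\mid\mathcal S_n}=\EE{f(Y_i)\mid\mathcal S_n}$; averaging over $i\le n$ gives $\EE{f(Y_1)\mid\mathcal S_n}=\tfrac1n\sum_{i=1}^n f(Y_i)=\int f\,d\mu_n$, the right-hand side being $\mathcal S_n$-measurable. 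Thus $\bround{\int f\,d\mu_n}_n$ is a reverse martingale for the decreasing filtration $(\mathcal S_n)_n$, and the reverse martingale convergence theorem gives $\int f\,d\mu_n\to\EE{f(Y_1)\mid\mathcal E}$ a.s.\ and in $L^1$. Running this simultaneously over a countable convergence-determining family of test functions produces, on one almost-sure event, a random probability measure $\mu_\infty$, measurable with respect to $\mathcal E$, with $\mu_n\to\mu_\infty$ weakly and $\int f\,d\mu_\infty=\EE{f(Y_1)\mid\mathcal E}$.

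The crux is then the factorization
\begin{align}
\EE{\prod_{j=1}^k f_j(Y_j)\mid\mathcal E}\;=\;\prod_{j=1}^k\EE{f_j(Y_1)\mid\mathcal E}\;=\;\prod_{j=1}^k\int f_j\,d\mu_\infty
\end{align}
for bounded measurable $f_1\ddd f_k$. To obtain it, fix $n\ge k$: by exchangeability, $\EE{\prod_{j=1}^k f_j(Y_j)\mid\mathcal S_n}$ equals the average of $\prod_{j=1}^k f_j(Y_{\iota(j)})$ over all injections $\iota\colon[k]\hookrightarrow[n]$, and this average differs from $\prod_{j=1}^k\bround{\tfrac1n\sum_{i=1}^n f_j(Y_i)}=\prod_{j=1}^k\int f_j\,d\mu_n$ only through the non-injective index tuples, a discrepancy bounded by $O(k^2/n)\prod_j\normo{f_j}_\infty$. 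Letting $n\to\infty$ and invoking the previous paragraph gives the displayed identity; since $\mu_\infty$ is $\mathcal E$-measurable, this says precisely that the regular conditional law of $(Y_1\ddd Y_k)$ given $\mathcal E$ is the product $\mu_\infty^{\otimes k}$. As $k$ is arbitrary, the conditional law of $(Y_i)_{i\ge1}$ given $\mathcal E$ is $\mu_\infty^{\otimes\infty}$, and integrating out $\mathcal E$ exhibits the law of $(Y_i)_{i\ge1}$ as the mixture $\int\nu^{\otimes\infty}\,\pr{\mu_\infty\in d\nu}$ of i.i.d.\ sequences.

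The main difficulty I anticipate is not a single hard idea but measure-theoretic bookkeeping. The Polish-space hypothesis is used both to realize $\mu_\infty$ as a genuine random probability measure and to pass from the moment factorization to the statement that the regular conditional law is an honest product measure; the latter step also wants a monotone-class extension from the convergence-determining family to all bounded measurable test functions, with a single null set serving all of them at once, and the $O(k^2/n)$ collision count, though elementary, has to be written out.

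A leaner alternative, closer to the sampling picture recalled above, is the Diaconis--Freedman argument. A finite exchangeable sequence of length $n$ is, by that picture, a mixture over its own empirical measure $\mu_n$ of sampling-without-replacement laws; drawing $k$ items with versus without replacement from a population of size $n$ differ by at most $\binom k2/n$ in total variation; and sending $n\to\infty$ along a subsequence on which the laws of $\mu_n$ converge (one exists by tightness, the mean measure of $\mu_n$ being $\mathcal L(Y_1)$ for every $n$) identifies the law of each initial segment $(Y_1\ddd Y_k)$ with $\int\nu^{\otimes k}\,m(d\nu)$ for a single mixing measure $m$, whence the theorem by Kolmogorov extension.
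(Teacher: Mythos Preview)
The paper does not prove this statement: de Finetti's theorem is merely stated in Section~\ref{exchange} as a known result, without any argument, and then invoked to deduce the existence of limiting label proportions. So there is no ``paper's own proof'' to compare against.

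Your proposal, by contrast, is an accurate outline of two standard and correct proofs. The reverse-martingale argument you give first is the classical Hewitt--Savage route: the identity $\EE{f(Y_1)\mid\mathcal S_n}=\tfrac1n\sum_{i\le n}f(Y_i)$, reverse-martingale convergence, and the $O(k^2/n)$ collision bound to pass from U-statistics to products are all correctly stated, and your caveats about the Polish-space hypothesis, monotone-class extension, and null-set management are exactly the places where careful bookkeeping is required. The Diaconis--Freedman alternative you sketch at the end is likewise sound. Either argument goes well beyond what the paper attempts, which is simply to quote the theorem.
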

Consider an exchangeable sequence $(Y_i)_{i=1}^{\infty}$ where each $Y_i$ takes values in a discrete set $\mathcal Y$. By the law of large number and by de Finetti's theorem, each element of $\mathcal Y$ has a limiting proportion in  $(Y_i)_{i=1}^\infty$. Moreover, these proportions vary for different realizations of $(Y_i)_{i=1}^\infty$.

\section{Polya urn model}\label{polya}
\subsection{The model}
In this model, at the beginning we have a set containing elements labeled by $1 \ddd k$. Let $\alpha_i \in \N_+$ be the number of elements with label $i$. At each step, we choose uniformly randomly a element from the set, record its label, and put it back along with another element of the same label. Let $(Y_i)_{i=1}^{\infty}$ be the sequence of the recorded labels. This sequence is a stochastic process that satisfies
\begin{align}\label{polya1}
\pr{Y_1=i} = \frac{\alpha_i}{\alpha_1 + \dots + \alpha_k}
\end{align}
and
\begin{align}\label{polya2}
\pr{Y_{n+1}=i|Y_1 \ddd Y_n} = \frac{\alpha_i + n_i}{\alpha_1+\dots+\alpha_k + n},
\end{align}
where $n_i$ is the number of occurrence of $i$ in the sequence $Y_1 \ddd Y_n$. The equations (\ref{polya1}) and (\ref{polya2}) uniquely determines a stochastic process for $\alpha_1 \ddd \alpha_k$ that are not restricted to $\N_+$, but rather extend to $\R_+$. We call such process \emph{Polya urn process} with parameters $(\alpha_1 \ddd \alpha_k) \in \R^k_+$.

The Polya urn process has the remarkable property of being exchangeable, which is not at all trivial from the definition, since to prove it one has no other way than doing explicit calculations.

\subsection{Formal set construction}
Imagine a set containing a total of $-\alpha_1-\dots-\alpha_k$ elements divided into $k$ groups labeled from $1$ to $k$, each with sizes $-\alpha_1 \ddd -\alpha_k$ respectively, where $\alpha_1 \ddd \alpha_k > 0$. Forgetting the fact that the cardinality of a set must be a non-negative integer, let us see what happens when we sample without replacement from this set. The probability that the first element has label $i$ is
\begin{align*}
\frac{-\alpha_i}{-\alpha_1-\dots-\alpha_k} = \frac{\alpha_i}{\alpha_1+\dots+\alpha_k},
\end{align*}
Suppose that after $n$ steps,  we have taken out $n_i$ elements with label $i$. In the set there remains $-\alpha_i-n_i$ elements with label $i$. The probability of the $(n+1)$-th element having label $i$ is
\begin{align*}
\frac{-\alpha_i - n_i}{-\alpha_1-n_1-\dots -\alpha_k-n_k} =\frac{\alpha_i+n_i}{\alpha_1+\dots+\alpha_k+n},
\end{align*}
Although the underlying set is ill-defined, the probabilities arising from the sampling process are well-defined and precisely match those of the Polya urn model. With formal sets, the Polya urn model is described more concisely and its exchangeability becomes trivial.

\subsection{Joint probability}
Let $(y_1 \ddd y_n)$ be the sequence of labels in the first $n$ samplings. Suppose that the label $i$ appears $n_i$ times in this sequence. From the formal set construction we have

\begin{samepage}
\begin{align*}
\pr{Y_1=y_1 \ddd Y_n=y_n} &= \frac{(-\alpha_1)^{\downarrow n_1}\dots (-\alpha_k)^{\downarrow n_k}}{(-\alpha_1 - \dots-\alpha_k)^{\downarrow n }}
\end{align*}
Here the denominator counts the sequences of $n$ different elements from the formal set and $(-\alpha_i)^{\downarrow n_i}$ counts the sequences of different $n_i$ elements with label $i$. From the formula $(-x)^{\downarrow n } = (-1)^n x^{\uparrow n}$, we obtain
\begin{align}\label{tr}
\pr{Y_1=y_1 \ddd Y_n=y_n} &= \frac{ \alpha_1^{\uparrow n_1} \dots \alpha_k^{\uparrow n_k} }{( \alpha_1 + \dots + \alpha_k )^{\uparrow n}}
\end{align}
\end{samepage}

Note that with the usual definition of Polya urn model, we prove (\ref{tr}) by induction and then conclude that the sequence $(Y_i)_{i=1}^n$ is exchangeable.

\subsection{Dirichlet distribution}
Consider a Polya urn sequence $(Y_i)_{i=1}^n$ with parameters $(\alpha_1 \ddd \alpha_k)$. The probability of having $n_i$ labels $i$ in the sequence $(Y_1 \ddd Y_n)$ is 
\begin{align*}
p(n_1 \ddd n_k) = \frac{n!}{n_1! \dots n_k!} .\frac{ \alpha_1^{\uparrow n_1} \dots \alpha_k^{\uparrow n_k} }{( \alpha_1 + \dots + \alpha_k )^{\uparrow n}}
\end{align*}
Let $x_i = n_i/n$. Using the fact that
\begin{align}\label{yo}
\frac{\Gamma(m+r)}{\Gamma(m+s)} \simeq m^{r-s}, \quad m \ra \infty,
\end{align}
we obtain the density of $(x_1 \ddd x_k)$ as $n$ tends to infinity:
\begin{align*}
f(x_1 \ddd x_k) = \frac{\Gamma(\alpha_1+\dots+\alpha_k)}{\Gamma(\alpha_1)\dots\Gamma(\alpha_k)} x_1^{\alpha_1-1} \dots x_k^{\alpha_k-1} 1_{\Delta_k}(x_1 \ddd x_k)
\end{align*}
where
\begin{align*}
\Delta_k = \br{x_1 \ddd x_k \geq 0: x_1+\dots+x_k=1}.
\end{align*}
This is the Dirichlet distribution $\text{Dir}(\alpha_1 \ddd \alpha_n)$. In summary, the proportions of $1 \ddd k$ in an infinite Polya urn sequence with parameters $(\alpha_1 \ddd \alpha_k)$ follow the Dirichlet distribution with the same parameters.

From the formal set construction of Polya urn model, the following properties of the Dirichlet distribution can be derived with very little effort.

\begin{theorem}\label{dir1}
Consider a random vector $(X_1, \ldots, X_n)$ following a Dirichlet distribution with parameters $(\alpha_1, \ldots, \alpha_n)$. Then
\begin{enumerate}
\item \label{dir11} \normal{(Aggregation)} If $[n]$ is partitioned into subsets $B_1 \ddd B_k$, then
\begin{align}
\bround{\sum_{i \in B_1} X_i \ddd \sum_{i \in B_k} X_i} \sim \normal{Dir} \bround{\sum_{i \in B_1}\alpha_i \ddd \sum_{i \in B_k} \alpha_i}
\end{align}

\item \label{dir12} \normal{(Neutrality)} Let $I$ be an ordered subset\footnote{An ordered set is simply a set with elements arranged in some order. For $I=(i_1 \ddd i_k)$, denote $x_I = (x_{i_1} \ddd x_{i_k})$ .}  of $[n]$, and $\tilde X_I = X_I/\sum_{i \in I} X_i$. Then $\tilde X_I$ follows $ \normal{Dir}(\alpha_{I})$ and is independent of $X_{I^c}$.
\end{enumerate}
\end{theorem}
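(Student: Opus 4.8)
The plan is to use the identification established just above this theorem: a $\mathrm{Dir}(\alpha_1,\dots,\alpha_n)$ vector $(X_1,\dots,X_n)$ is the vector of limiting label-proportions of an infinite Polya urn sequence $(Y_m)_{m=1}^\infty$ with parameters $(\alpha_1,\dots,\alpha_n)$. Both parts then become statements about how such a sequence transforms under coarsening the label set (for part \ref{dir11}) and under restricting to a sub-block of labels (for part \ref{dir12}), and in both cases the formal-set picture reduces the argument to elementary counting.

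For aggregation, I would relabel the formal set underlying the process: an element originally carrying label $i\in B_j$ is reassigned the label $j$. Block $B_j$ then contains $-\sum_{i\in B_j}\alpha_i$ elements, so by the sampling computation in the formal-set construction above, drawing without replacement from the relabelled formal set is a Polya urn process with parameters $\big(\sum_{i\in B_1}\alpha_i,\dots,\sum_{i\in B_k}\alpha_i\big)$. Hence its limiting block-proportions follow $\mathrm{Dir}\big(\sum_{i\in B_1}\alpha_i,\dots,\sum_{i\in B_k}\alpha_i\big)$; and the limiting proportion of block $j$ is the sum of the limiting proportions of the labels comprising it, namely $\sum_{i\in B_j}X_i$. (If one prefers to avoid the formal set here, the same conclusion follows by checking directly from~(\ref{polya2}) that the block-valued process $Z_m=j\Leftrightarrow Y_m\in B_j$ obeys the Polya recursion with the aggregated parameters.)

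For neutrality, put $a=\sum_{i\in I}\alpha_i$ and $b=\sum_{i\in I^c}\alpha_i$, and split $(Y_m)$ into three pieces that jointly recover it: the coarse pattern $\epsilon_m=\1[Y_m\in I]$, the subsequence $U$ of the $Y_m$ lying in $I$ (in order of appearance), and the subsequence $V$ of those lying in $I^c$. Using the joint-probability formula~(\ref{tr}), the probability of any finite initial segment factorizes as
\begin{align*}
\frac{\prod_{i\in[n]}\alpha_i^{\uparrow n_i}}{(a+b)^{\uparrow N}}=\frac{\prod_{i\in I}\alpha_i^{\uparrow n_i}}{a^{\uparrow p}}\cdot\frac{\prod_{i\in I^c}\alpha_i^{\uparrow n_i}}{b^{\uparrow q}}\cdot\frac{a^{\uparrow p}\,b^{\uparrow q}}{(a+b)^{\uparrow N}},
\end{align*}
where $N$ is the segment length, $n_i$ the count of label $i$, $p=\sum_{i\in I}n_i$, and $q=N-p$. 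By~(\ref{tr}) the three factors are exactly the $\mathrm{Polya}(\alpha_I)$ law of $U$, the $\mathrm{Polya}(\alpha_{I^c})$ law of $V$, and the $\mathrm{Polya}(a,b)$ law of $\epsilon$; since $(y_1,\dots,y_N)\mapsto(\epsilon_{1:N},U,V)$ is a bijection, $U$, $V$, $\epsilon$ are mutually independent with these laws. Now apply the law of large numbers: the proportion of the first $N$ draws landing in $I$ tends to some $W$, a function of $\epsilon$; for $i\in I$ one gets $n_i/N\to W\tilde X_i$, where $(\tilde X_i)_{i\in I}$ are the limiting proportions inside $U$, while $n_i/N\to X_i$ by definition, so $\tilde X_i=X_i/\sum_{j\in I}X_j$ and $\tilde X_I\sim\mathrm{Dir}(\alpha_I)$; for $i\in I^c$ one gets $X_i=(1-W)\tilde V_i$ with $(\tilde V_i)$ the limiting proportions inside $V$, so $X_{I^c}$ is a function of $(\epsilon,V)$ alone. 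As $U$ is independent of $(\epsilon,V)$, this gives $\tilde X_I$ independent of $X_{I^c}$.

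The Pochhammer factorization and the relabelling step are routine. The point that needs care is the passage from independence of the three \emph{sequences} to independence of the \emph{limiting proportions}: one must confirm that $W$, the within-$I$ proportions and the within-$I^c$ proportions all exist almost surely, are measurable with respect to the stated pieces, and that the interleaving $\epsilon$ carries no information about the within-$I$ pattern beyond the common scale factor $W$. This is exactly the bookkeeping that, in a bare ``exchangeable sequence'' proof, would force a tangle of conditional densities and Jacobians, and that the formal-set sampling picture renders transparent.
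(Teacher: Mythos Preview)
Your proof of part~\ref{dir11} is essentially the paper's: relabelling the formal set by blocks is exactly what the paper does in one sentence.

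For part~\ref{dir12} you take a genuinely different route. The paper stays entirely inside the formal-set heuristic: it temporarily overwrites every label in $I$ with a dummy label~$0$, samples without replacement, and then ``reveals'' the hidden labels; the revealed subsequence is declared to be Polya$(\alpha_I)$ and independent of the pre-reveal observations, by appeal to the obvious fact about sampling without replacement from an (ordinary) finite set. You instead decompose $(Y_m)$ into $(\epsilon,U,V)$ and verify independence \emph{computationally}, by factorizing the joint probability from~(\ref{tr}) as a product of three Polya likelihoods. Your argument is correct and is, in effect, the rigorous justification that the paper's one-line appeal to the sampling intuition leaves implicit. What the paper's version buys is brevity and fidelity to its thesis that formal sets make such calculations unnecessary; what your version buys is that nothing rests on the heuristic, and the only place you invoke the formal set at all is the derivation of~(\ref{tr}) itself. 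One small point worth making explicit in your write-up: to pass from the finite-$N$ factorization to independence of the three infinite sequences you need that both $p$ and $q$ tend to infinity almost surely, which follows since the Polya$(a,b)$ process $\epsilon$ has strictly positive limiting proportions for both symbols.
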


\begin{proof}
$(X_1 \ddd X_n)$ can be viewed as the proportion of $1\ddd n$ in a Polya urn sequence $Y=(Y_i)_{i=1}^\infty$ with parameters $(\alpha_1 \ddd \alpha_n)$. Consider the formal set construction of this Polya urn model.

The property \ref{dir11} can be easily proved by combining all the elements labeled by $B_j$ in the formal set into a new group for each $j=1\ddd k$.

To prove \ref{dir12}, imagine assigning a special label $0$ to each element in the formal set that already has a label from $I$. This new label temporarily conceals the original one. Next, we sample without replacement from the formal set as usual. Finally, we remove the labels $0$ to reveal the original labels. The sequence of revealed labels forms a Polya urn process with parameters $\alpha_I$. This sequence is independent of the observations before the revealing, therefore independent of $X_{I^c}$.
\end{proof}

\begin{cor}\label{dir2}
Let $(X_1 \ddd X_n)$ be a Dirichlet random vector with parameters $\alpha_1 \ddd \alpha_n > 0$. Then
\begin{enumerate}
\item \label{dir21} \normal{(Marginal law)} 
\begin{align*}
P_{X_1 \ddd X_k}(x_1 \ddd x_k) \propto x_1^{\alpha_1-1}\dots x_k^{\alpha_k-1}(1-x_1 - \dots - x_k)^{\alpha_{k+1}+\dots+\alpha_n-1}
\end{align*}
In particular
\begin{align}
X_i \sim \normal{Beta}\bround{ \alpha_i, \sum_{j \neq i} \alpha_j }
\end{align}

\item \label{dir22} \normal{(Gamma construction)} Let $Z_i \sim \Gamma(\alpha_i, 1)$ be independent. Then
\begin{align}
(X_1 \ddd X_n) \deq \bround{ \frac{Z_i}{Z_1+\dots+Z_n} }_{i=1}^n 
\end{align}
\item \label{dir23} \normal{(Stick-breaking construction)} Consider $(X_1^* \ddd X_n^*)$ constructed as follows
\begin{align*}
X_1^* &= W_1 \\
X_2^* &= W_2(1-W_1)  \\
\dots \\
X_{n-1}^* &= W_{n-1}(1-W_{n-2}) \dots (1-W_1)
\end{align*}
and $X_n^* = 1-X_1^*-\dots-X_n^*$, where $W_i$ are independent with
\begin{align*}
W_i \sim \normal{Beta}(\alpha_i, \alpha_{i+1}+\dots +\alpha_n)
\end{align*}
Then
\begin{align*}
(X_1 \ddd X_n) \deq (X_1^* \ddd X_n^*)
\end{align*}
\end{enumerate}
\end{cor}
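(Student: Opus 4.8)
The plan is to derive all three parts from Theorem~\ref{dir1} together with the Dirichlet density established above, without introducing any new formal-set construction; the only place where a genuinely continuous computation seems unavoidable is the Gamma construction, which is where I expect the main difficulty. For the marginal law I would first apply Aggregation (Theorem~\ref{dir1}, part~\ref{dir11}) to the partition $\{1\} \ddd \{k\},\ \{k+1 \ddd n\}$ of $[n]$, which gives that $(X_1 \ddd X_k,\ X_{k+1}+\dots+X_n)$ is a $(k+1)$-dimensional Dirichlet vector with parameters $(\alpha_1 \ddd \alpha_k,\ \alpha_{k+1}+\dots+\alpha_n)$. Since its last coordinate equals $1-X_1-\dots-X_k$, the joint density of $(X_1 \ddd X_k)$ is read off directly from the $(k+1)$-dimensional Dirichlet density by substituting $x_{k+1}=1-x_1-\dots-x_k$; this is exactly the stated expression, and the Beta law is the special case $k=1$.

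For the stick-breaking construction I would induct on $n$, the case $n=1$ being trivial. Applying Neutrality (Theorem~\ref{dir1}, part~\ref{dir12}) with the ordered set $I=(2 \ddd n)$ shows that $(X_2 \ddd X_n)/(X_2+\dots+X_n)=(X_2 \ddd X_n)/(1-X_1)$ follows $\normal{Dir}(\alpha_2 \ddd \alpha_n)$ and is independent of $X_1$, while the marginal-law part just proved gives $X_1 \sim \normal{Beta}(\alpha_1,\ \alpha_2+\dots+\alpha_n)$. Writing $(X_2 \ddd X_n)=(1-X_1)\,U$ with $U \sim \normal{Dir}(\alpha_2 \ddd \alpha_n)$ independent of $X_1$ and applying the induction hypothesis to $U$, one expresses $U$ through independent Beta variables $W_2 \ddd W_{n-1}$ exactly as in the statement; together with $X_1$ (renamed $W_1$) this matches $(X_1 \ddd X_n)$ coordinate by coordinate with $(X_1^* \ddd X_n^*)$, so $(X_1 \ddd X_n) \deq (X_1^* \ddd X_n^*)$.

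The Gamma construction is the hard part: Theorem~\ref{dir1} never mentions Gamma variables, so a continuous argument is genuinely needed. The most economical route is a single change of variables: with $Z_i \sim \Gamma(\alpha_i,1)$ independent and $S=Z_1+\dots+Z_n$, push the product density $\prod_i z_i^{\alpha_i-1}e^{-z_i}/\Gamma(\alpha_i)$ on $\R_+^n$ forward under $(z_1 \ddd z_n)\mapsto(z_1/S \ddd z_{n-1}/S,\ S)$, whose Jacobian is $S^{n-1}$; the resulting density factorizes as a function of $(x_1 \ddd x_{n-1})$ proportional to $x_1^{\alpha_1-1}\dots x_{n-1}^{\alpha_{n-1}-1}(1-x_1-\dots-x_{n-1})^{\alpha_n-1}$ times a $\Gamma(\alpha_1+\dots+\alpha_n,1)$ density in $S$, and integrating out $S$ leaves the Dirichlet density. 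If one prefers to keep the analysis minimal, an alternative is to prove only the case $n=2$ this way (the classical Beta--Gamma identity, which also yields that $Z_1/(Z_1+Z_2)$ is independent of $Z_1+Z_2$ and that $Z_1+Z_2 \sim \Gamma(\alpha_1+\alpha_2,1)$) and then lift to general $n$ by induction, using the converse of Neutrality --- if $U \sim \normal{Dir}(\beta_1 \ddd \beta_m)$ and $V \sim \normal{Beta}(\beta_1+\dots+\beta_m,\ b)$ are independent then $(VU_1 \ddd VU_m,\ 1-V)\sim \normal{Dir}(\beta_1 \ddd \beta_m,\ b)$ --- which itself follows from Theorem~\ref{dir1} applied to a genuine $\normal{Dir}(\beta_1 \ddd \beta_m,b)$ vector. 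In that second approach the fiddly point is bookkeeping the mutual independences, namely that at each stage the vector of ratios among $Z_1 \ddd Z_m$ is independent of the partial sum $Z_1+\dots+Z_m$.
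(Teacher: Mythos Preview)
Your treatments of the marginal law and the stick-breaking construction are essentially identical to the paper's: aggregation with the partition $\{1\},\dots,\{k\},\{k{+}1,\dots,n\}$ for the former, and repeated neutrality for the latter (the paper just unwinds the recursion coordinate by coordinate rather than phrasing it as a formal induction).

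The Gamma construction is where the paper takes a genuinely different route. You anticipate having to do a Jacobian change of variables (or, in your alternative, the two-dimensional Beta--Gamma calculation plus an inductive assembly). The paper avoids any such continuous computation and stays entirely within Theorem~\ref{dir1}: it introduces an auxiliary Dirichlet vector $(V_1,\dots,V_n,T)\sim\text{Dir}(\alpha_1,\dots,\alpha_n,t)$, observes from neutrality that the normalized vector $\bigl(V_i/\sum_j V_j\bigr)_{i=1}^n$ has the target law $\text{Dir}(\alpha_1,\dots,\alpha_n)$ for every $t$, and then lets $t\to\infty$, checking from the explicit Dirichlet density that $(tV_1,\dots,tV_n)\dra(Z_1,\dots,Z_n)$ with independent $Z_i\sim\Gamma(\alpha_i,1)$. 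What this buys is consistency with the paper's programme of replacing integrals and Jacobians by combinatorial/limiting arguments derived from the formal-set picture; what your approach buys is self-containment (no limit to justify) and the classical independence of $(Z_i/S)_i$ from $S$ as a by-product.
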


\begin{proof}
The claim \ref{dir21} follows from
\begin{align*}
(X_1 \ddd X_k, X_{k+1}+\dots+X_n) \sim \text{Dir}(\alpha_1 \ddd \alpha_k, \alpha_{k+1}+\dots+\alpha_n).
\end{align*}
To prove the claim \ref{dir22}, consider
\begin{align*}
(V_1 \ddd V_n, T) \sim \text{Dir}(\alpha_1 \ddd \alpha_n, t)
\end{align*}
It is easy to check that
\begin{align*}
(tV_1 \ddd tV_n) \dra (Z_1 \ddd Z_n), \quad t \ra \infty
\end{align*}
On the other hand, from Theorem \ref{dir1}, we have
\begin{align*}
\bround{ \frac{tV_i}{tV_1+\dots+tV_n} }_{i=1}^n \deq (X_1 \ddd X_n)
\end{align*}
By taking $t \ra \infty$, we obtain the claim \ref{dir22}. Claim \ref{dir23} follows from the neutrality of Dirichlet distribution. First we have $X_1 \deq W_1$. Then the relative size of $X_2$ in $(X_2 \ddd X_n)$ is independent of $X_1$ and has the same law as $W_2$, so $X_2 \deq W_2(1-W_1)$. The claim can be proved by repeating this argument.
\end{proof}

\section{Chinese restaurant process}\label{chinese}
\subsection{The model}
Let $\alpha, \theta$ be parameters that satisfies either one of the following cases
\begin{enumerate}
\item $\alpha<0$ and $\theta=-k\alpha$ for some $k \in \N_+$ or \label{crp1}
\item $0\leq \alpha \leq 1$ and $\theta > -\alpha$. \label{crp2}
\end{enumerate}
The Chinese restaurant process with parameters $(\alpha, \theta)$ is defined as follows. Imagine a restaurant with an infinite number of tables. At the beginning all tables are empty. The first customer arrives and sits at any table. If the first $n$ customers occupy $m$ tables, then the $(n+1)$-th customer will choose
\begin{itemize}
\item a table with $t \geq 1$ customers with probability $\frac{t-\alpha}{n+\theta}$
\item an empty table with probability $\frac{m\alpha + \theta}{n + \theta}$
\end{itemize}
The first $n$ customers form a partition of $[n]$ in which tables represent blocks and customers represent elements. By continuing this process infinitely we obtain a random partition of $\N$. This random partition has the remarkable property of exchangeability, meaning that its law remains unchanged under finite permutations of $\N$.

\subsection{Formal set construction}
Consider a set of $-\theta$ elements divided into $-\theta/\alpha$ groups, each containing $\alpha$ elements. Let us see what happens when we sample without replacement from this set. Suppose the first $n$ sampled elements belong to $m$ groups. After this, there are $\theta - n$ elements remaining in the set. For a group with $t$ sampled elements, there are $\alpha - t$ elements of that group still in the set. Therefore, the probability of the $(n+1)$-th element coming from this group is:
\begin{align*}
\frac{\alpha - t}{-\theta - n} = \frac{t-\alpha}{n+\theta}.
\end{align*}
Since all the probabilities add up to $1$, the probability for the $(n+1)$-th element to be in a new group is 
\begin{align*}
\frac{m\alpha + \theta}{n + \theta}.
\end{align*}
These probabilities exactly match those of Chinese restaurant process. With formal sets, the Chinese restaurant process can be described more concisely and it becomes trivial that the resulting partition of $\N$ is exchangeable.

For the parameters $(\alpha, \theta)$ in case \ref{crp1}, the formal set consists of $k$ blocks of size $\alpha$, where $k\in\N_+$. Thus, the Chinese restaurant process is the same as the Polya urn process with $k$ labels and parameters $(-\alpha \ddd -\alpha)$. The resulting random partition on $\N$ always has $k$ blocks. In contrast, for the parameters $(\alpha, \theta)$ in case \ref{crp2}, we will see that the resulting partition of $\N$ contains an infinite number of blocks almost surely.

\subsection{Ewens-Pitman distribution}
\begin{theorem}\label{ewens} \normal{(Ewens-Pitman distribution)}
Consider the Chinese restaurant process with parameters $(\alpha, \theta)$. Let $\Pi_n$ be the random partition of $[n]$ formed by the first $n$ customers. Let $\pi$ be a partition of $[n]$ with block sizes $n_1 \ddd n_k$. Then
\begin{align}\label{ewens-1}
\pr{\Pi_n=\pi} = \frac{(\theta/\alpha)^{\uparrow k}}{\theta^{\uparrow n}} \prod_{i=1}^k -(-\alpha)^{\uparrow n_i}
\end{align}
\end{theorem}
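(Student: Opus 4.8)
The plan is to exploit the formal set construction of the preceding subsection, which reduces the statement to a pure counting identity combined with the elementary polynomial identity $(-x)^{\downarrow m}=(-1)^m x^{\uparrow m}$, thereby bypassing the usual inductive proof.

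First I would set up the bookkeeping. In the formal picture, the first $n$ customers correspond to an ordered $n$-tuple $(e_1\ddd e_n)$ of distinct elements drawn without replacement from the formal set, which has $-\theta$ elements split into $-\theta/\alpha$ groups of size $\alpha$; sampling without replacement assigns equal weight to each such tuple, and there are $(-\theta)^{\downarrow n}$ of them. For a partition $\pi$ with blocks $B_1\ddd B_k$ of sizes $n_1\ddd n_k$, the event $\{\Pi_n=\pi\}$ says precisely that $e_i$ and $e_j$ lie in the same group exactly when $i$ and $j$ lie in the same block of $\pi$. Counting the favourable tuples: the $k$ blocks are distinguishable and must occupy distinct groups, which can be arranged in $(-\theta/\alpha)^{\downarrow k}$ ways; once a block $B_\ell$ is assigned to a group of size $\alpha$, its $n_\ell$ positions (ordered by the indices lying in $B_\ell$) must be filled by distinct elements of that group, in $\alpha^{\downarrow n_\ell}$ ways. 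Hence
\[
\pr{\Pi_n=\pi}=\frac{(-\theta/\alpha)^{\downarrow k}\,\prod_{\ell=1}^k \alpha^{\downarrow n_\ell}}{(-\theta)^{\downarrow n}}.
\]
Then I would apply $(-x)^{\downarrow m}=(-1)^m x^{\uparrow m}$ to each factor: $(-\theta)^{\downarrow n}=(-1)^n\theta^{\uparrow n}$, $(-\theta/\alpha)^{\downarrow k}=(-1)^k(\theta/\alpha)^{\uparrow k}$, and $\alpha^{\downarrow n_\ell}=(-1)^{n_\ell}(-\alpha)^{\uparrow n_\ell}$. Since $\sum_\ell n_\ell=n$, the factors $(-1)^n$ in numerator and denominator cancel, leaving an overall $(-1)^k$, which I would distribute as $(-1)^k\prod_\ell(-\alpha)^{\uparrow n_\ell}=\prod_{\ell=1}^k\bigl(-(-\alpha)^{\uparrow n_\ell}\bigr)$, yielding exactly (\ref{ewens-1}).

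The main thing that deserves care — the only genuine conceptual obstacle — is the legitimacy of the formal counting argument, i.e.\ the claim that it really computes the CRP probabilities. This is already secured by the preceding subsection: the one-step conditional probabilities of the formal sampling were shown there to coincide with those of the CRP, so the two processes share the same sequential description and hence the same joint law, and the computation above is simply the evaluation of that law against the uniform weighting on ordered tuples. A secondary point is to check the two parameter regimes. In case \ref{crp1} one has $-\theta/\alpha=k\in\N_+$, so $(-\theta/\alpha)^{\downarrow k}=k!$ and (\ref{ewens-1}) collapses to the Polya urn joint law (\ref{tr}) with labels $(-\alpha\ddd-\alpha)$, summed over the $k!$ labellings of the blocks — consistent with the remark that this CRP coincides with a $k$-label Polya urn. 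In case \ref{crp2} every falling factorial above is an honest real-number expression, so the identity is formal and no case distinction is needed. It is also worth confirming the formula against the defining seating rule for $n=1$ and $n=2$, which fixes the signs unambiguously.
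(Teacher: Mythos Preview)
Your proposal is correct and follows essentially the same route as the paper: count ordered $n$-tuples favourable to $\pi$ via the formal-set picture to get $\pr{\Pi_n=\pi}=\dfrac{(-\theta/\alpha)^{\downarrow k}\prod_\ell \alpha^{\downarrow n_\ell}}{(-\theta)^{\downarrow n}}$, then convert falling to rising factorials with $(-x)^{\downarrow m}=(-1)^m x^{\uparrow m}$. Your additional remarks on sign bookkeeping, the two parameter regimes, and small-$n$ sanity checks are fine but go beyond what the paper includes.
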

This surely can be proved by induction, starting from the definition, although that would be tedious. We give here a very short derivation using formal sets.

\begin{proof}
Recall that the Chinese restaurant process is equivalent to sampling without replacement from a formal set of $-\theta$ elements divided into $-\theta/\alpha$ groups of size $\alpha$. There are $(-\theta)^{\downarrow n}$ ways of choosing a sequence of $n$ different elements from the formal set. Next, we will compute the number of such sequences that lead to the partition $\pi$. There are $(-\theta/\alpha)^{\downarrow k} $ ways of assigning groups of the formal set to $k$ blocks of $\pi$. Afterwards, for each block of size $n_i$, there are $\alpha^{\downarrow n_i}$ ways to select its elements. Therefore
\begin{align}
\pr{\Pi_n = \pi} = \frac{(-\theta/\alpha)^{\downarrow k}}{(-\theta)^{\downarrow n}} \prod_{i=1}^k \alpha^{\downarrow n_i}
\end{align}
From the fact that $(-x)^{\downarrow n} = (-1)^n x^{\uparrow n}$, we obtain the result given by (\ref{ewens-1}).
\end{proof}

\begin{remark} In Theorem \ref{ewens}, for $\alpha = 0$, we have Ewens sampling formula
\begin{align*}
\pr{\Pi_n=\pi} = \frac{\theta^k}{\theta^{\uparrow n}} \prod_{i=1}^k (n_i-1)!
\end{align*}
For $\theta = 0$, we have
\begin{align*}
\pr{\Pi_n=\pi} =\frac{ (k-1)!}{\alpha (n-1)!} \prod_{i=1}^k -(-\alpha)^{\uparrow n_i} 
\end{align*}
These formulas can be obtained by setting $\alpha \ra 0$ or $\theta \ra 0$ in the Ewens-Pitman distribution.
\end{remark}

\subsection{Block weights}
Consider the random partition of $\N$ generated from the Chinese restaurant process with parameters $(\alpha, \theta)$. Let $B_1$ be the block containing $1$, $B_2$ be the the block containing the smallest element not in $B_1$, $B_3$ be the block containing the smallest element not in $B_1, B_2$, and so on. For each $i$, the \emph{weight} of $B_i$ is defined as
\begin{align}\label{cq}
V_i = \lim_{n \ra \infty} \frac{|B_i \cap [n] |}{n}
\end{align}
It is clear that $\sum_i V_i=1$. We have the following result, called \emph{stick-breaking construction} of $(V_i)$:
\begin{theorem}
The sequence $(V_1, V_2, \dots)$ in (\ref{cq}) is well-defined and has the same law as  $(V_1^\star, V_2^\star, \dots)$, where
\begin{align*}
V_1^\star &= W_1 \\
V_2^\star &= W_2(1-W_1)\\
\dots \\
V_k^\star &= W_k(1-W_{k-1})\dots(1-W_1)\\
\dots
\end{align*}
for $W_1,W_2,\dots$ independent and
\begin{align*}
W_j \sim \normal{Beta}(1-\alpha, \theta+j\alpha)
\end{align*}
for each $j=1,2,\dots$. As a consequence, the random partition has an infinite number of blocks, each with a positive weight.
\end{theorem}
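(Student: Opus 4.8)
The plan is to read off the block weights one at a time, using the formal set construction of Section~\ref{chinese} to expose a self-similar structure: removing the first group from the formal set turns a Chinese restaurant process with parameters $(\alpha,\theta)$ into an independent copy with parameters $(\alpha,\theta+\alpha)$, and this is precisely the stick-breaking recursion. First I would handle $V_1$. Recording only whether each of the customers $2,3,\dots$ joins the block $B_1$ of the first customer gives a Polya urn process with two labels and parameters $(1-\alpha,\theta+\alpha)$: from the formal set viewpoint this is immediate, since once the first customer has consumed one of the $\alpha$ elements of its group $G_1$ there remain $\alpha-1$ elements of $G_1$ and $-\theta-\alpha$ elements outside it, and sampling without replacement from a formal set with $-\beta_1$ and $-\beta_2$ elements of two labels is the Polya urn process with parameters $(\beta_1,\beta_2)$. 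Hence, by the convergence of Polya urn proportions to the Dirichlet (here Beta) law established in Section~\ref{polya}, the limit $V_1$ in (\ref{cq}) exists almost surely and $V_1\sim\normal{Beta}(1-\alpha,\theta+\alpha)$, so $V_1\deq W_1$.

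Next I would establish the self-similarity. Deleting the whole group $G_1$ from the formal set and continuing to sample, what remains is $-(\theta+\alpha)$ elements split into $-(\theta+\alpha)/\alpha$ groups of size $\alpha$, i.e.\ the formal set of a Chinese restaurant process with parameters $(\alpha,\theta+\alpha)$. By the neutrality argument used to prove Theorem~\ref{dir1} — in a sampling without replacement, the subsequence of elements avoiding $G_1$ is itself a sampling without replacement from the other elements and is independent of the subsequence of $G_1$-elements, hence of the whole record of which customers lie in $B_1$ — the customers \emph{not} in $B_1$, listed in their original order, form a Chinese restaurant process with parameters $(\alpha,\theta+\alpha)$ that is independent of $V_1$. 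Its $i$-th block is $B_{i+1}$, and with $n'=n-|B_1\cap[n]|$ the number of such customers among the first $n$, one has $|B_{i+1}\cap[n]|/n=(|B_{i+1}\cap[n]|/n')\cdot(n'/n)$; since $n'/n\to 1-V_1>0$ almost surely, the weights $V_i'$ of the sub-process are well-defined whenever those of the original are, and $V_{i+1}=(1-V_1)\,V_i'$ almost surely.

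I would then conclude by induction on $m$, the statement being quantified over all admissible parameter pairs (those in case~\ref{crp1} or case~\ref{crp2}, noting $(\alpha,\theta+\alpha)$ is again admissible, or the process has a single block and the claim is trivial): $(V_1,\dots,V_m)$ exists almost surely and has the law of $(V_1^\star,\dots,V_m^\star)$ with $W_j\sim\normal{Beta}(1-\alpha,\theta+j\alpha)$. The base case $m=1$ is the first paragraph. For the step, the first two paragraphs give $V_1\deq W_1$ with, conditionally on $V_1$, the tail $(V_2,\dots,V_{m+1})=(1-V_1)(V_1',\dots,V_m')$ where $(V_1',\dots,V_m')$ is the weight prefix of an independent $(\alpha,\theta+\alpha)$-process; by the inductive hypothesis the latter is a stick-breaking prefix with factors $W_j'\sim\normal{Beta}(1-\alpha,\theta+(j+1)\alpha)$, and setting $W_{j+1}:=W_j'$ one gets $V_{i+1}=(1-V_1)\,W_i'\prod_{j<i}(1-W_j')=W_{i+1}\prod_{j\le i}(1-W_j)$, the claim for $m+1$. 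Finally, in case~\ref{crp2} every $W_j$ has positive parameters $1-\alpha$ and $\theta+j\alpha$ (using $\theta>-\alpha$), hence is non-degenerate with values in $(0,1)$, so all $V_i^\star$ are a.s.\ positive and the partition has infinitely many blocks of positive weight, while in case~\ref{crp1} the process is the $k$-label Polya urn of Section~\ref{polya} and the sequence terminates after $k$ blocks. (That the weights sum to $1$ is immediate from (\ref{cq}), and also follows from $\prod_j(1-W_j)=0$ a.s., a consequence of $\sum_j\E[-\log(1-W_j)]=\infty$.)

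The main obstacle is the self-similarity step: promoting the heuristic ``discard the first group and continue'' to a rigorous statement — that the customers outside $B_1$ form an independent Chinese restaurant process with the shifted parameter $\theta+\alpha$ — which is the formal-set counterpart of the neutrality property of the Dirichlet distribution and needs care about conditioning on the random group $G_1$. One could instead derive this step by a direct computation from the Ewens–Pitman formula of Theorem~\ref{ewens}, but that is exactly the kind of induction the formal-set approach is meant to avoid.
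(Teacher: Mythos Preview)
Your proposal is correct and follows essentially the same approach as the paper's own proof: peel off the first block via the formal set (obtaining $V_1\sim\normal{Beta}(1-\alpha,\theta+\alpha)$ from the two-label Polya urn with $\alpha-1$ remaining in $G_1$ and $-\theta-\alpha$ outside), observe that the remainder is a Chinese restaurant process with parameter shifted to $\theta+\alpha$, and iterate. Your version is more carefully organized --- you make the induction explicit, invoke neutrality to justify independence, and verify the admissibility of the shifted parameters and the ``infinitely many blocks'' consequence --- whereas the paper simply repeats ``by the same argument''; but the underlying idea is identical.
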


\begin{proof}
Consider the formal set that gives rise to the Chinese restaurant process with parameters $(\alpha, \theta)$. Let us call $x_1$ the first element drawn from the formal set. After $x_1$ is drawn, in the formal set there remain $\alpha - 1$ elements within the same group as $x_1$, and $-\theta - \alpha$ other elements. From our discussion on Polya urn model, in the infinite sequence drawn from this remaining set, the proportion of elements of the same group as $x_1$ is $W_1 \sim \text{Beta}(1-\alpha, \theta+\alpha)$. 

Now let us ignore the block containing $x_1$ and consider the remaining sequence. The blocks in this sequence have a total weight of $1-W_1$ and are generated from the drawing on a formal set of $-\theta -\alpha $ elements divided into groups of size $\alpha$. Let $x_2$ be the first element of this sequence. By the same argument as in the previous paragraph, the block containing $x_2$ has relative size $W_2 \sim \text{Beta}(1-\alpha, \theta + 2\alpha)$, therefore its size is $W_2(1-W_1)$. 

Ignoring blocks containing $x_1, x_2$, the rest has a total weight of $(1-W_1)(1-W_2)$. By repeating the same argument we obtain the result of the theorem.
\end{proof}

By rearranging the sequence $(V_i)$ in decreasing order, we obtain the sequence $(S_i)$. These two sequences are reorderings of each other. Specifically, $(S_i)$ is the decreasing reordering of $(V_i)$, and $V_i$ is referred to as the \emph{size-biased reordering} of $(S_i)$.

The sequence $S_1 \geq S_2 \geq \dots$ is a sample from the \emph{Poisson-Dirichlet distribution} with parameters $(\alpha, \theta)$, which is a probability distribution on the set of non-increasing sequences of positive numbers adding up to $1$. Results on Poisson-Dirichlet distribution can be found in \cite{handa2009two} \cite{perman1992size}  \cite{pitman2006combinatorial} \cite{pitman1997two}. 

While the weights $(V_i)$ are rather simple, as they can be described by the stick-breaking process, the weights $(S_i)$ are much more complicated as the density of $S_i$ for a fixed $i$ can have singular points \cite{derrida1987statistical}.

\head{Correlation functions.} The set $\br{S_i, i \in \N}$ define a point process in $\R$. The $k$-correlation function of a random point process $\br{X_i}$ in $\R$ is defined as a function $\rho_k(x_1 \ddd x_k)$ such that the probability of the process having one point in each of the intervals $[x_i, x_i+dx_i] $ for $i=1\ddd k$ is $\rho_k(x_1 \ddd x_k)dx_1 \dots dx_k$ as $dx_i \ra 0$. Equivalently, the $k$-correlation function can be defined as satisfying
\begin{align}
\E \bsq{ \sum_{i_1 \ddd i_k (\neq)} f(X_{i_1}\ddd X_{i_k}) } = \int f(x_1 \ddd x_k) \rho(x_1 \ddd x_k) dx_1 \dots dx_k
\end{align}
for any non-negative measurable function $f$, where the sum is over $k$ different indices. Since $f \geq 0$, the sum on the left hand side has always a limit in $[0, \infty]$. It follows from both definitions that $\rho_k$ is invariant by permutations of its variables. Note that $\rho_k$ is not a probability density: if $\br{X_i}$ has infinite points then the integral of $\rho_k$ over $x_1 \ddd x_k$ is infinite.

Let us compute the correlation function of the point process $\br{S_i}$. Imagine drawing $n$ elements from a the formal set with $-\theta$ elements divided into groups of size $\alpha$. We will compute the probability $P(n_1 \ddd n_k)$ that there are blocks of sizes $n_1 \ddd n_k$ in the partition formed by these $n$ elements, where $n_1 \ddd n_k \in \N_+$ such that $n_1+\dots+n_k \leq n$. There are
\begin{itemize}
\item $ \binom{-\theta/\alpha}{k}$ ways of choosing $k$ groups in the formal set.
\item $\binom{\alpha}{n_1} \dots \binom{\alpha}{n_k}$ ways of choosing the elements for the blocks of sizes $n_1 \ddd n_k$.
\item $\binom{-\theta-k\alpha}{n-n_1-\dots-n_k}$ ways of choosing other elements.
\item $\binom{-\theta}{n}$ ways of choosing $n$ elements from the formal set.
\end{itemize}
$P(n_1 \ddd n_1)$ is obtained as the product of the first three numbers divided by the fourth number. Let $x_k = n_k/n$. As $n$ tends to infinity, by using (\ref{yo}) and the formula
\begin{align*}
\binom{-x}{m} = \frac{(-1)^m \Gamma(x+m)}{\Gamma(x) \Gamma(m+1)}
\end{align*}
we obtain the $k$-correlation function for the block weights \cite{handa2009two}
\begin{align}
\rho_k(x_1 \ddd x_k) = c_{k,\alpha,\theta} \, x_1^{-\alpha-1} \dots x_k^{-\alpha-1}(1-x_1-\dots - x_k)^{ k\alpha + \theta - 1 }
\end{align}
where
\begin{align}
c_{k,\alpha,\theta} = \frac{ \Gamma(\theta/\alpha+k) \Gamma(\theta) \alpha^k }{ \Gamma(\theta+k\alpha) \Gamma(\theta/\alpha) \Gamma(1-\alpha)^k }.
\end{align}

\bibliographystyle{siam}
\bibliography{formal.bib}

\end{document}